\setlist[itemize]{leftmargin=*}
\setlist[enumerate]{leftmargin=*}
\newenvironment{leftalign*}[1][\parindent]{\setlength\hangindent{#1}\start@align\tw@\st@rredtrue\m@ne}{\endalign}
\renewcommand{\phi}{\varphi}
\newcommand{\rr}{\mathbb{R}}
\newcommand{\R}{\mathfrak{R}}
\newcommand{\I}{\mathfrak{I}}
\newcommand{\C}{\mathbb{C}}
\newcommand{\N}{\mathbb{N}}
\newcommand{\norm}[1]{\| #1 \|}
\newcommand{\hs}{\text{Haus}}
\newcommand{\dn}{\mathbf{dn}}
\newcommand{\Latremoliere}{Latr\'emoli\`ere}
\let\oldenumerate=\enumerate
	\def\enumerate{
	\oldenumerate
	\setlength{\itemsep}{5pt}
	}
\let\olditemize=\itemize
	\def\itemize{
	\olditemize
	\setlength{\itemsep}{5pt}
	}
\theoremstyle{plain}
\newtheorem{Theorem}{Theorem}[section]
\newtheorem{Proposition}[Theorem]{Proposition}
\theoremstyle{definition}
\newtheorem{Convention}[Theorem]{Convention}
 \newtheorem{Definition}[Theorem]{Definition}
\newtheorem{Notation}[Theorem]{Notation}
\numberwithin{equation}{section}
\begin{document}

\title[The Fell topology and the modular Gromov-Hausdorff propinquity]{The Fell topology and the modular Gromov-Hausdorff propinquity}
\author{Konrad Aguilar}
\address{Department of Mathematics and Statistics, Pomona College, 610 N. College Ave., Claremont, CA 91711} 
\email{konrad.aguilar@pomona.edu}
\urladdr{\url{https://aguilar.sites.pomona.edu/}}
\thanks{The first author gratefully acknowledges the financial support from the Independent Research Fund Denmark through the project 'Classical and Quantum Distances" (grant no. 0940-00107B)}

\author{Jiahui Yu}
\address{Department of Mathematics and Statistics, Pomona College, 610 N. College Ave., Claremont, CA 91711} 
\email{jyad2018@mymail.pomona.edu}

\date{\today}
\subjclass[2000]{Primary:  46L89, 46L30, 58B34.}
\keywords{}

\begin{abstract}
  Given a unital AF-algebra $A$ equipped with a faithful tracial state, we equip each (norm-closed two-sided) ideal of $A$ with a metrized quantum vector bundle structure, when canonically viewed as a module over $A$, in the sense of \Latremoliere{} using previous work of the first author and \Latremoliere{}. Moreover, we show that convergence of ideals in the Fell topology implies convergence of the associated  metrized quantum vector bundles in the modular Gromov-Hausdorff propinquity of \Latremoliere{}.
  In a similar vein but requiring a different approach, given a compact metric space $(X,d)$, we equip each ideal of $C(X)$ with a metrized quantum vector bundle structure, and show that convergence in the Fell topology implies convergence in the modular Gromov-Hausdorff propinquity. 
\end{abstract}
\maketitle

\section{Introduction and Background}

The realm of noncommutative metric geometry introduced by Rieffel \cite{Rieffel98a,Rieffel00} provides a metric framework for continuity of C*-algebras using noncommutative analogues to compact metric spaces and the Gromov-Hausdorff distance. Examples of C*-algebras that vary continuously in certain senses include: continuous fields of C*-algebras and in particular  inductive/direct limits of C*-algebras, matrices converging to the sphere and other structures found in the high-energy physics literature, etc \cite{Rieffel01}. Rieffel was first to introduce a noncommutative analogue to the Gromov-Hausdorff distance called the Gromov-Hausdorff distance \cite{Rieffel00}, and many others developed their own noncommutative analogues to answer various questions left after Rieffel developed his \cite{Kerr02, Kerr09, Li03, Wu, Latremoliere13, Latremoliere13c}. Our work has led us to work mainly with \Latremoliere's distance, the Gromov-Hausdorff propinquity \cite{Latremoliere13, Latremoliere13c}, due to the advantages it has afforded the first author -  in particular, the completeness argument in \cite{Latremoliere13c}
as applied in \cite{Aguilar18, Aguilar21}. Moreover, with \Latremoliere, the first author also established convergence of AF-algebras using the Gromov-Hausdorff propinquity.

However, there are many   important structures one can associate to C*-algebras  such as group actions, Hilbert C*-modules, spectral triples, etc, whose continuity is also of interest, and  \Latremoliere{} has developed a suite of quantum distances to address each of these scenarios \cite{Latremoliere18, Modular, Latremoliere21b, Latremoliere21} and has answered fascinating continuity results of these structures \cite{Latremoliere2020b}. Rieffel has also utilized \Latremoliere's Gromov-Hausdorff propinquity to show convergence of various structures as well \cite{Rieffel15, Rieffel18}. 

In this article, we focus on convergence of certain Hilbert C*-modules. Although  quantum distances require more information than that of the C*-algebras  such as L-seminorms (noncommutative analogues to the Lipschitz seminorm), it is natural to consider structures arising purely from the C*-algebraic structure. For instance, when \Latremoliere{} introduced the modular Gromov-Hausdorff propinquity, he first proved that convergence of a sequence of quantum compact metric spaces in the Gromov-Hausdorff propinquity implies convergence of free modules of finite rank in the modular Gromov-Hausdorff propinquity, and second, he provided sufficient conditions for when convergence of two sequences of  modules implies convergence of their direct sum \cite[Section 7 and 8]{Modular}. Following in this theme, we look at ideals of C*-algebras in this article, which canonically form modules over C*-algebras, and ask if convergence of ideals in the Fell topology provide convergence in the modular Gromov-Hausdorff propinquity. In Section \ref{s:AF}, we answer this in the positive for ideals of any unital  AF-algebra equipped with a faithful tracial state. In Section \ref{s:comm}, we answer this in the positive for any separable unital commutative C*-algebra (note that a unital   C*-algebra equipped with a quantum metric is necessarily separable, so this case is as general as possible).  For the remainder of this section, we list the necessary definitions and results for the rest of the article. 
We note that all convergence result in this paper are also true for the dual modular Gromov-Hausdorff propinquity since the modular propinquity dominates it (see  \cite[Proposition 3.17]{Latremoliere21b}). 

\begin{Convention}\label{c:c*}
Let $A$ be a unital C*-algebra. We denote its norm by $\|\cdot\|_A$ and identity by $1_A$ unless otherwise specified. Moreover, by an {\em ideal } of a C*-algebra, we mean a norm-closed two-sided ideal, and we denote the set of ideals by $Ideals(A)$.
\end{Convention}

The story begins with quantum compact metric spaces.

\begin{Definition}{\cite[Definition 2.3]{Modular}}\label{d:admissible}
A function $F:[0, \infty)^4 \rightarrow [0, \infty)$ is {\em admissible} when for all
\[
(x_1, x_2, x_3, x_4), (y_1, y_2, y_3, y_4)\in [0,\infty)^4
\]
such that $x_j\leq y_j$ for all $j \in \{1,2,3,4\}$, we have
\[
F(x_1, x_2, x_3, x_4)\leq F (y_1, y_2, y_3, y_4)
\]
and $x_1x_4+x_2x_3 \leq F(x_1, x_2, x_3, x_4)$.
\end{Definition}

\begin{Definition}{\cite[Definition 2.4]{Modular}}\label{d:qcms}
An {\em $F$-Leibniz quantum compact metric
space} $(A, L)$, for some admissible function $F$, is a unital C*-algebra $A$ and a seminorm $L$ defined on a dense Jordan-Lie subalgebra $dom (L)$ of $sa (A)$, such that
\begin{enumerate}
\item  $\{a \in dom (L) : L(a) = 0\} = \rr1_A$,
\item  the Monge-Kantorovich metric $mk_L$, defined on the state space $S (A)$ by
setting for all $\phi, \psi \in S (A)$
\[mk_L(\phi, \psi) = \sup \{|\phi(a)-\psi(a)| : a \in sa (A), L(a) \leq 1\} ,\]
metrizes the weak* topology on $S (A)$,
\item  $L$ is lower semi-continuous, i.e. $\{a \in sa (A) : L(a) \leq  1\}$ is norm closed,
\item  for all $a, b \in dom (L)$, we have:
\[\max \{L (a \circ b) , L (\{a, b\})\} \leq  F (\|a\|_A, \|b\|_A, L(a), L(b)) .\]
\end{enumerate}
The seminorm $L$ of a $F$-Leibniz quantum compact metric space $(A, L)$ is
called an {\em L-seminorm} of type $F$.
\end{Definition}

The two main examples of quantum metric spaces we focus on in this article are given in the following two theorems.

\begin{Theorem}{\cite{Kantorovich40, Kantorovich58}}\label{t:cx-lip}
Let $(X,d)$ be a compact metric space. If we define
\[
L_d(f)=\sup \left\{\frac{|f(x)-f(y)|}{d(x,y)} : x,y \in X, x \neq y \right\}
\]
for all $f \in C(X)$, then $(C(X), L_d)$ is an $F$-Leibniz compact quantum metric space, where $F(x_1, x_2, x_3, x_4)=x_1x_4+x_2x_3$ for all $(x_1,x_2, x_3, x_4)\in [0,\infty)^4$.
\end{Theorem}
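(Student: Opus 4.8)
The plan is to verify directly the four conditions of Definition \ref{d:qcms} for the pair $(C(X), L_d)$ with the choice $F(x_1,x_2,x_3,x_4) = x_1x_4 + x_2x_3$. Three of the four conditions are essentially formal for a Lipschitz seminorm; the one piece of real content is that the Monge--Kantorovich metric $mk_{L_d}$ metrizes the weak* topology on $S(C(X))$, which is the classical Kantorovich--Rubinstein theorem, and that is the step I expect to carry the weight of the argument.

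\emph{Domain and conditions (1) and (3).} First I would record that $dom(L_d) = \{f \in sa(C(X)) : L_d(f) < \infty\}$ is precisely the algebra of real-valued Lipschitz functions on $(X,d)$. Sums and, since $X$ is bounded, products of Lipschitz functions are Lipschitz, and the constants are Lipschitz, so $dom(L_d)$ is a subalgebra of $sa(C(X))$; since $C(X)$ is commutative, $f \circ g = fg$ and $\{f,g\} = 0$ identically, so $dom(L_d)$ is a Jordan--Lie subalgebra. It is uniformly dense in $sa(C(X)) = C_{\rr}(X)$ by the Stone--Weierstrass theorem, because the Lipschitz functions contain the constants and separate points (e.g. via $x \mapsto d(x,x_0)$). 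Condition (1) is immediate: $L_d(f) = 0$ forces $|f(x) - f(y)| \le 0$ for all $x,y$, so $f \in \rr 1_{C(X)}$, and conversely constants have vanishing $L_d$. For condition (3), $L_d$ is the pointwise supremum of the norm-continuous functionals $f \mapsto |f(x) - f(y)|/d(x,y)$, hence is norm lower semicontinuous, so $\{f : L_d(f) \le 1\}$ is norm closed.

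\emph{Condition (4).} The function $F(x_1,x_2,x_3,x_4) = x_1x_4 + x_2x_3$ is nondecreasing in each coordinate and satisfies $x_1x_4 + x_2x_3 \le F(x_1,x_2,x_3,x_4)$, so it is admissible in the sense of Definition \ref{d:admissible}. Since $\{f,g\} = 0$ we have $L_d(\{f,g\}) = 0 \le F(\cdots)$ trivially. For the Jordan part, the identity $f(x)g(x) - f(y)g(y) = f(x)\big(g(x)-g(y)\big) + \big(f(x)-f(y)\big)g(y)$ together with $|f(x)| \le \|f\|_{C(X)}$ and $|g(y)| \le \|g\|_{C(X)}$ gives, after dividing by $d(x,y)$ and taking the supremum over $x \ne y$,
\[
L_d(f \circ g) = L_d(fg) \le \|f\|_{C(X)}\, L_d(g) + \|g\|_{C(X)}\, L_d(f) = F\big(\|f\|_{C(X)}, \|g\|_{C(X)}, L_d(f), L_d(g)\big),
\]
which is in fact the sharp Leibniz bound.

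\emph{Condition (2) --- the main step.} Here I would show $mk_{L_d}$ metrizes the weak* topology on $S(C(X))$, the Radon probability measures on $X$. Fix a base point $x_0 \in X$; since states are unital, subtracting the scalar $f(x_0)1_{C(X)}$ alters neither $L_d(f)$ nor $\phi(f) - \psi(f)$, so the defining supremum may be restricted to $\mathcal{L}_1 = \{f \in sa(C(X)) : L_d(f) \le 1,\ f(x_0) = 0\}$. This set is equicontinuous and uniformly bounded by $\operatorname{diam}(X)$, hence norm-totally-bounded in $C(X)$ by the Arzel\`a--Ascoli theorem; in particular $mk_{L_d}$ takes values in $[0,\operatorname{diam}(X)]$ and separates states, since distinct probability measures are separated by some continuous and hence (by uniform approximation) some Lipschitz function. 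Total boundedness of $\mathcal{L}_1$ is exactly the hypothesis of Rieffel's criterion for $mk_L$ to metrize the weak* topology, so together with (1) and (3) the proof concludes; equivalently, one checks directly that the identity map from the compact metrizable weak* topology to $(S(C(X)), mk_{L_d})$ is continuous --- uniform convergence of the test functions over the totally bounded set $\mathcal{L}_1$ converts weak* convergence into $mk_{L_d}$-convergence --- and a continuous bijection from a compact space onto a Hausdorff space is a homeomorphism. The only genuine obstacle is this Arzel\`a--Ascoli/total-boundedness input; the remaining verifications are bookkeeping.
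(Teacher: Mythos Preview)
Your proof is correct and follows the standard route: verify the easy formal conditions, then invoke Arzel\`a--Ascoli (equivalently Rieffel's total-boundedness criterion) for the one substantive step that $mk_{L_d}$ metrizes the weak* topology. Note, however, that the paper does not give its own proof of this theorem at all---it is stated as a classical result with citations to Kantorovich, so there is nothing in the paper to compare your argument against.
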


\begin{Theorem}{{\cite[Theorem 3.5]{Aguilar}}}\label{t:af-lip}
Let $A=\overline{\cup_{n \in \N}A_n}^{\|\cdot\|_A}$ be a unital AF-algebra equipped with a faithful tracial state $\tau$ such that $A_0=\C1_A$. For each $n \in \N$, let 
\[
E_n:A \rightarrow A_n
\]
denote the unique $\tau$-preserving conditional expectation onto $A_n$. Let $(\beta(n))_{n \in \N}$ be a sequence in $(0,\infty)$ that converges to $0$.   For each $(x_1,x_2, x_3, x_4)\in [0,\infty)^4$, set \[F(x_1, x_2, x_3, x_4)=2(x_1x_4+x_2x_3).\]

If we define
\[
L_\beta (a)=\sup \left\{ \frac{\|a-E_n(a)\|_A}{\beta(n)} : n \in \N \right\}
\]
for all $a \in A$, then $(A, L)$ and    $(A_n, L)$ for all $n \in \N$ are $F$-Leibniz   quantum compact metric spaces such that 
\[
\Lambda ((A_n, L), (A, L)) \leq \beta(n)
\]
for all $n \in \N$, and thus
\[
\lim_{n \to \infty} \Lambda ((A_n, L), (A, L))=0,
\]
where $\Lambda$ is the quantum Gromov-Hausdorff propinquity of \cite{Latremoliere13}.
\end{Theorem}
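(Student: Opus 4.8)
The plan is to verify the four defining conditions of an $F$-Leibniz quantum compact metric space (Definition~\ref{d:qcms}) for $(A,L_\beta)$ and for each $(A_n,L_\beta)$, with $F(x_1,x_2,x_3,x_4)=2(x_1x_4+x_2x_3)$, and then to exhibit, for each $n$, an explicit bridge from $(A_n,L_\beta)$ to $(A,L_\beta)$ of length at most $\beta(n)$; since $\beta(n)\to 0$, the convergence statement follows. For the quantum metric structure I would first record the standard facts that each $E_k$ is a unital, $*$-preserving, contractive conditional expectation fixing $A_j$ for $j\le k$, and that the family is compatible, $E_j\circ E_k=E_{\min(j,k)}$, by uniqueness of the $\tau$-preserving conditional expectation. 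Then each $a\mapsto\|a-E_k(a)\|_A$ is a norm-continuous seminorm on $sa(A)$, so $L_\beta$ is a lower semicontinuous seminorm; its null space is $\bigcap_k\bigl(sa(A)\cap A_k\bigr)=sa(A_0)=\rr 1_A$ since $A_0=\C 1_A$; and its domain contains the norm-dense set $\bigcup_k sa(A_k)$, because for $a\in sa(A_k)$ only the finitely many terms with index below $k$ contribute to the supremum. For the Monge--Kantorovich condition I would use Rieffel's total-boundedness criterion: if $L_\beta(a)\le 1$ and $\tau(a)=0$, then $E_0(a)=\tau(a)1_A=0$ and hence $\|a\|_A\le\|a-E_0(a)\|_A\le\beta(0)$, so the set is bounded; and given $\varepsilon>0$, choosing $N$ with $\beta(N)<\varepsilon$ gives $\|a-E_N(a)\|_A\le\beta(N)<\varepsilon$ with $E_N(a)$ ranging over a bounded, hence totally bounded, subset of the finite-dimensional algebra $A_N$, so the set is covered by finitely many $\varepsilon$-balls.

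The remaining condition is the $F$-Leibniz inequality. Since $E_k(a),E_k(b)\in A_k$ and $E_k$ fixes $A_k$,
\begin{align*}
ab-E_k(a)E_k(b) &= (a-E_k(a))\,b+E_k(a)\,(b-E_k(b)),\\
ab-E_k(ab) &= \bigl(ab-E_k(a)E_k(b)\bigr)-E_k\bigl(ab-E_k(a)E_k(b)\bigr),
\end{align*}
and since $E_k$ is contractive with $\|E_k(a)\|_A\le\|a\|_A$, these yield $\|ab-E_k(ab)\|_A\le 2\beta(k)\bigl(\|b\|_A L_\beta(a)+\|a\|_A L_\beta(b)\bigr)$ for every $k$, hence $L_\beta(ab)\le 2\bigl(\|a\|_A L_\beta(b)+\|b\|_A L_\beta(a)\bigr)$. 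Writing $a\circ b=\tfrac12(ab+ba)$ and $\{a,b\}=\tfrac{1}{2\mathrm{i}}(ab-ba)$ and using subadditivity of $L_\beta$, the symmetry of the bound in $a$ and $b$, and $L_\beta(\mathrm{i}c)=L_\beta(c)$, gives condition (4); this also shows $dom(L_\beta)$ is a Jordan--Lie subalgebra. The identical computation --- with the supremum over the finitely many relevant indices --- gives the $F$-Leibniz property for each $(A_n,L_\beta)$.

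For the propinquity estimate I would use the bridge $\gamma_n=(A,1_A,\iota_n,\mathrm{id}_A)$, where $\iota_n\colon A_n\hookrightarrow A$ is the inclusion. Its bridge seminorm is $(a,b)\mapsto\|a-b\|_A$, and because the pivot is $1_A$ its height vanishes: $\{\varphi\circ\iota_n:\varphi\in S(A)\}=S(A_n)$ since every state of $A_n$ extends to $A$ (e.g.\ through $E_n$), and $\{\varphi\circ\mathrm{id}_A:\varphi\in S(A)\}=S(A)$. For the reach, one direction is free by taking $b=a$; for $b\in sa(A)$ with $L_\beta(b)\le 1$, take $a=E_n(b)\in sa(A_n)$, so that $\|b-E_n(b)\|_A\le\beta(n)L_\beta(b)\le\beta(n)$, while $L_\beta(E_n(b))\le L_\beta(b)\le 1$ follows from $E_m(E_n(b))=E_n(b)$ when $m\ge n$ and, when $m<n$, from $E_m(E_n(b))=E_m(b)$ together with $\|E_n(b)-E_m(b)\|_A=\|E_n(b-E_m(b))\|_A\le\|b-E_m(b)\|_A$. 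Hence $\varrho(\gamma_n)\le\beta(n)$, so the length $\lambda(\gamma_n)\le\beta(n)$ and $\Lambda\bigl((A_n,L_\beta),(A,L_\beta)\bigr)\le\beta(n)$.

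The step I expect to be the main obstacle is the Leibniz estimate: because neither $a$ nor $b$ need lie in any $A_k$, bimodularity of $E_k$ cannot be invoked directly, and one is forced to detour through the product $E_k(a)E_k(b)$; this detour is exactly what produces the constant $2$ in $F$, with one factor of $2$ coming from the need to also compare $E_k(ab)$ with $E_k(a)E_k(b)$. The auxiliary inequality $L_\beta(E_n(b))\le L_\beta(b)$ behind the reach bound is the other point requiring care, and it is where the compatibility of the conditional expectations onto the nested subalgebras $A_m$ enters.
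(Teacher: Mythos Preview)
Your proposal is correct. Note, however, that the present paper does not itself prove Theorem~\ref{t:af-lip}: it is quoted as background from \cite[Theorem~3.5]{Aguilar}. The only trace of the original argument visible here is in Step~2 of the proof of Proposition~\ref{idealnAF}, where the authors record that the bridge $(A,1_A,\iota_n,\mathrm{id}_A)$ used in \cite[Theorem~3.5]{Aguilar} has height $0$ and basic reach at most $\beta(n)$ --- precisely the bridge and estimates you produce. So your approach coincides with the cited one insofar as this paper reveals it.
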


Now, we move on to the module setting.

\begin{Definition}{{\cite[Definition 3.6]{Modular}}}\label{d:fgh-admissible}
A triple $(F,G,H)$ is {\em admissible} when
\begin{enumerate}
    \item $F:[0, \infty)^4\rightarrow [0,\infty)$ is admissible in the sense of Definition \ref{d:admissible},
    \item $G:[0,\infty)^3\rightarrow [0,\infty)$ satisfies $G(x,y,z)\leq G(x',y',z')$ if $x\leq x', y\leq x', z\leq z'$, while
    \[
    (x+y)z\leq G(x,y,z), 
    \]
    and
    \item $H:[0,\infty)^2\rightarrow [0,\infty)$ satisfies $H(x,y)\leq H(x',y')$ if $x\leq x', y \leq y'$ while $2xy\leq H(x,y).$
\end{enumerate}
\end{Definition}

\begin{Notation}
Let $a \in A$, where $A$ is a C*-algebra. We denote $\R a=\frac{a+a^*}{2}\in sa(A)$ and $\I a=\frac{a-a^*}{2i}\in sa(A)$, where $a=\R a+i\I a$.
\end{Notation}

\begin{Definition}{{\cite[Definition 3.8]{Modular}}}\label{d:mqvb}
An   {\em $(F,G,H)$-metrized quantum vector bundle} \[\Omega=(M, \langle \cdot, \cdot \rangle_M, D_M, A, L)\] for some admissible triple $(F,G,H)$  is given by an $F$-Leibniz quantum compact metric space $(A, L)$ as well as a left Hilbert module $(M, \langle \cdot, \cdot\rangle_M)$ over $A$ (see \cite[Definition 3.4]{Modular} and \cite{Rieffel74}) and a norm $D_M$ defined on a dense $\C$-subspace $dom(D_M)$ of $M$ such that 
\begin{enumerate}
    \item $\|\cdot\|_M\leq D_M$, where $\|\cdot\|_M=\sqrt{\|\langle \cdot, \cdot\rangle_M\|_A}$ is the norm induced by $\langle \cdot, \cdot\rangle_M$,
    \item the set
    \[
    \{ \omega \in M: D_M(\omega)\leq 1\}
    \]
    is compact for $\|\cdot\|_M$
    \item for all $a \in sa(A)$ and for all $\omega \in M$, we have:
    \[
    D_M(a\omega) \leq G(\|a\|_A, L(a), D_M(\omega)),
    \]
    \item for all $\omega, \eta \in M$, we have
    \[
    \max\{ L(\R\langle \omega, \eta\rangle_M), L(\I \langle \omega, \eta\rangle_M)\}\leq H(D_M(\omega), D_M(\eta)).
    \]
\end{enumerate}
    The norm $D_M$ is called the {\em D-norm} of $\Omega$.
\end{Definition}

The modular Gromov-Hausdorff propinquity, $\Lambda^{mod}$, introduced by \Latremoliere{} in \cite{Modular} forms a metric on the class of $(F,G,H)$-metrized quantum vector bundles up to a natural notion of isomorphism (see \cite[Definition 3.18, 3.19]{Modular}). The rest of the article concerns itself by establishing natural convergence results with respect to $\Lambda^{mod}$. We do not define $\Lambda^{mod}$ in this article since we do not require its entire construction to achieve our results.

\section{The AF-algebra case}\label{s:AF}

We now establish that for a given unital AF algebra equipped with faithful tracial state, it holds that convergence of ideals in the Fell topology implies convergence of modules equipped with appropriate D-norms. The strategy is to use the $L$-seminorm of Theorem \ref{t:af-lip} and introduce a quantity that captures the ideal structure. This is done by canonical approximate identities for ideals of AF algebras. This approach allows us establishing finite-dimensional approximations to achieve our results. We also note that  have to restrict the sequence $(\beta(n))_{n \in \N}$ of Theorem \ref{t:af-lip} in order to allow for finite-dimensional approximations and note that   $(1/(n!))_{n \in \N}$ satisfies the following restriction on $(\beta(n))_{n \in \N}.$

\begin{Proposition}\label{idealnAF}
Using the setting of Theorem \ref{t:af-lip}, assume furthermore that 
\begin{equation}\label{eq:beta} \left(\frac{\beta(n)}{\beta(n-1)}\right)_{n \in \N}\end{equation}
 converges to $0$. 
Let $I \in \text{Ideals}(A)$. For any $n \in \N$, put $I_n=I\cap A_n$. Since $I_n$ is finite dimensional,   let $1_n$ be the unit in $I_n$ (note that $(1_n)_{n \in \N}$ is the canonical  approximate identity for $I$). For any $\omega \in I$, define
$$D_I(\omega)=\max\left\{L_\beta(\omega),\norm{\omega}_A,\sup_{n\in \N}\left\{\frac{\norm{\omega-\omega 1_n}_A}{\beta(n)} \right\}\right\}.$$
Let $D_{I_n}$ be the restriction of $D_I$ to $I_n$. Let $G(x,y,z)=F(x,y,z,z)$ and $H(x,y)=F(x,x,y,y)$ for all $(x,y,z)\in [0, \infty)^3$ with $F$ as in Theorem \ref{t:af-lip}. Then, $(F,G,H)$ is an admissible triple. Define $$\langle \omega, \nu \rangle_{I}=\omega \nu^* $$
for all $\omega, \nu \in I$, 
and let  $\langle \cdot, \cdot \rangle_{I_n}$ denote  the restriction of 
$\langle \cdot , \cdot \rangle_{I}$ to $I_n$.

It holds that  $\Omega^\beta_I=(I, \langle \cdot, \cdot \rangle_I, D_I, A, L_\beta)$ and $\Omega^\beta_{I_n}=(I_n, \langle \cdot, \cdot \rangle_{I_n}, D_{I_n}, A_n, L_\beta)$ are $(F,G,H)$-metrized quantum vector bundles for any $n\in \N$ such that 
$$\lim_{n\rightarrow\infty}\Lambda^{mod}(\Omega^\beta_{I_n}, \Omega^\beta_I)=0,$$
where $\Lambda^{mod}$ is the modular Gromov-Hausdorff propinquity of  \cite{Modular}.

In particular, for each $n \in \N, n \geq 1$, it holds that
\[
 \Lambda^{mod}(\Omega^\beta_{I_n}, \Omega^\beta_I)\leq \max\{\beta(n), x_n-1\},
\]
where $x_n=\max\{1+\beta(n), x_n'\}$ with $x_n'=\max_{m<n}\left\{\frac{2\beta(n)+\beta(m)}{\beta(m)} \right\}$, and $( \max\{\beta(n), x_n-1\})_{n \in \N}$ converges to $0$ as $n \to \infty$.
\end{Proposition}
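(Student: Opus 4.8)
The plan is to break the statement into two independent blocks of work: first, verifying that $\Omega^\beta_I$ and each $\Omega^\beta_{I_n}$ really are $(F,G,H)$-metrized quantum vector bundles in the sense of Definition \ref{d:mqvb}; and second, producing an explicit bound on $\Lambda^{mod}(\Omega^\beta_{I_n},\Omega^\beta_I)$ using a bridge/tunnel between the two, from which the limit follows. For the first block, admissibility of $(F,G,H)$ with $G(x,y,z)=F(x,y,z,z)$ and $H(x,y)=F(x,x,y,y)$ is an immediate computation from $F(x_1,x_2,x_3,x_4)=2(x_1x_4+x_2x_3)$: monotonicity is inherited, $(x+y)z\le G(x,y,z)=2(xz+yz)$ and $2xy\le H(x,y)=2(xy+xy)=4xy$. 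Axiom (1), $\|\cdot\|_I\le D_I$, is built into the definition of $D_I$ since $\|\omega\|_I=\sqrt{\|\omega\omega^*\|_A}=\|\omega\|_A\le D_I(\omega)$. For axiom (2), compactness of the $D_I$-ball, I would note that $D_I\ge L_\beta$ so the $D_I$-ball sits inside an $L_\beta$-ball, which is totally bounded by Theorem \ref{t:af-lip} (condition (2) of Definition \ref{d:qcms}), and closedness follows from lower semicontinuity of each of the three ingredients. The Leibniz-type axioms (3) and (4) are where the interaction with the module structure lives: for (3) one estimates $L_\beta(a\omega)$, $\|a\omega\|_A$, and $\sup_n \|a\omega - a\omega 1_n\|_A/\beta(n)$ against $G(\|a\|_A,L_\beta(a),D_I(\omega))$, using that $a(\omega-\omega 1_n)$ has norm at most $\|a\|_A\|\omega-\omega 1_n\|_A$; for (4) one uses the Leibniz property of $L_\beta$ from Theorem \ref{t:af-lip} together with $D_I\ge\|\cdot\|_A$ and $D_I\ge L_\beta$.

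The crux is the convergence estimate, and here the key structural fact to exploit is that for $m<n$ one has $I_m\subseteq I_n$ and, because $1_n$ is the unit of the finite-dimensional algebra $I_n$, $\omega 1_n=\omega$ for every $\omega\in I_n$; likewise $E_n$ restricts to the identity on $A_n$ and maps $I$ into $I_n$ (this needs a short argument: $E_n$ is $\tau$-preserving and $I_n=I\cap A_n$, so one checks $E_n(I)\subseteq I_n$, which should follow since $E_n$ is the conditional expectation and $A_n$, $I$ generate the relevant corners). Granting that, $E_n$ is a module-type map $I\to I_n$ that is the identity on $I_n$, and the natural bridge between $\Omega^\beta_{I_n}$ and $\Omega^\beta_I$ is built from the inclusion $A_n\hookrightarrow A$ together with this expectation. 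The hypothesis that $\beta(n)/\beta(n-1)\to 0$ is exactly what is needed to control the D-norm of $E_n(\omega)$ in terms of $D_I(\omega)$: for $\omega\in I$ with $D_I(\omega)\le 1$, one needs $L_\beta(E_n(\omega))$ and $\sup_m \|E_n(\omega)-E_n(\omega)1_m\|_A/\beta(m)$ bounded, and the terms with $m\ge n$ vanish (since $E_n(\omega)\in I_n\subseteq I_m$ forces $E_n(\omega)1_m=E_n(\omega)$ — wait, one must be careful about the direction of containment of units), so only finitely many $m<n$ contribute, each contributing a ratio of the shape $\frac{2\beta(n)+\beta(m)}{\beta(m)}$, whence the quantity $x_n'$ appears. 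Assembling these, the bridge has length at most $\max\{\beta(n),x_n-1\}$ with $x_n=\max\{1+\beta(n),x_n'\}$, and I would then invoke the definition of $\Lambda^{mod}$ via bridges (cited from \cite{Modular}) to conclude $\Lambda^{mod}(\Omega^\beta_{I_n},\Omega^\beta_I)\le\max\{\beta(n),x_n-1\}$.

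Finally, the limit: $\beta(n)\to 0$ is assumed, and $x_n-1\to 0$ follows from $x_n'\to 1$, which is where the ratio hypothesis $\beta(n)/\beta(n-1)\to 0$ does its real work — one shows $\max_{m<n}\frac{2\beta(n)+\beta(m)}{\beta(m)} = 1+2\max_{m<n}\frac{\beta(n)}{\beta(m)}$ and that the inner maximum tends to $0$ because $\beta$ decreases fast enough (the worst case $m=n-1$ gives $\beta(n)/\beta(n-1)\to 0$, and for smaller $m$ the ratio is even smaller once $\beta$ is eventually decreasing, which can be arranged or deduced). The remark that $\beta(n)=1/n!$ works is then a one-line check since $(1/n!)/(1/(n-1)!)=1/n\to 0$.

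I expect the main obstacle to be the convergence estimate — specifically, carefully constructing the bridge (the choice of pivot element, typically $1_A$ or $1_n$, and verifying the bridge axioms from \cite{Modular}) and correctly tracking how the finitely many ``low'' terms $m<n$ in the supremum defining $D_I$ behave under the expectation $E_n$, since this is where the precise constant $x_n$ is extracted. The module-bundle axioms (first block) should be routine once the definitions are unwound, and the final limit is a soft consequence of the ratio hypothesis.
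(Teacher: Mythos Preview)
Your overall architecture matches the paper's: verify the metrized quantum vector bundle axioms, build a bridge over the inclusion $A_n\hookrightarrow A$ with pivot $1_A$, observe that height, basic reach, and modular reach are controlled by the Aguilar--\Latremoliere{} bridge for $(A_n,L_\beta)\to(A,L_\beta)$, and reduce everything to bounding the imprint via an explicit approximation of the $D_I$-unit ball by the $D_{I_n}$-unit ball. Your identification of where the ratio hypothesis enters (only the finitely many terms $m<n$ survive, producing the constant $x_n'$) is also exactly right.

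There is, however, a genuine gap at the heart of your approximation step. You propose to approximate $\omega\in B$ by $E_n(\omega)$ and flag the claim $E_n(I)\subseteq I_n$ as ``needs a short argument''. That claim is false in general. Take $A_0=\C 1_A\subseteq A_1=\C\oplus\C$ with the diagonal embedding and $\tau(a,b)=\tfrac{a+b}{2}$; then $E_0(a,b)=\bigl(\tfrac{a+b}{2},\tfrac{a+b}{2}\bigr)$. The ideal $I=\{(a,0):a\in\C\}$ satisfies $I_0=I\cap A_0=\{0\}$, yet $E_0(1,0)=(\tfrac12,\tfrac12)\notin I_0$. So $E_n(\omega)$ need not land in $I_n$, and your bridge anchors would not lie in the correct module.

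The paper's fix is simple and is already implicit in the third term of $D_I$: use $E_n(b)\,1_n$ (equivalently $E_n(b\,1_n)$, since $1_n\in A_n$) as the approximant. Because $1_n\in I_n$ is central in $A_n$, the product $E_n(b)\,1_n$ lies in $I_n$ automatically, with no structural claim about $E_n$ needed. One then bounds $D\bigl(E_n(b)1_n\bigr)$ by estimating, for $m<n$, both $\|E_n(b1_n)-E_m(E_n(b1_n))\|$ and $\|E_n(b1_n)-E_n(b)1_n1_m\|$, each of which is at most $2\beta(n)+\beta(m)$; this is precisely where your $x_n'$ comes from. After rescaling by $x_n$ one gets an element of $B_n$ within $(x_n-1)+2\beta(n)$ of $b$. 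Once you replace $E_n(\omega)$ by $E_n(\omega)1_n$ throughout, your outline becomes the paper's proof.
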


\begin{proof}
 For ease of notation in this proof, we use $L$ for $L_\beta$,  $D$ for $D_I$, $D_n$ for $D_{I_n}$,   $\norm{\cdot}$ for $\norm{\cdot}_A$, $\Omega$ for $\Omega^\beta_I$, and $\Omega_n$ for $\Omega^\beta_{I_n}$. Moreover,  let $B=\{\omega \in I: D(\omega) \leq 1\}$ and $B_n=B\cap I_n$. 
We let 
$$\gamma_n=\{\Omega_n, \Omega, A, 1_A, \iota_n, id_A, (\omega_j)_{j\in J}, (\omega_j)_{j\in J} \}$$
be the bridge (see \cite[Definition 4.4]{Modular}) between $\Omega_n$ and $\Omega$, where 
      $\iota_{n}: A_n \rightarrow A$ is the inclusion map, $id_A:A\rightarrow A$ is the identity map on $A$, and $J=B_n$ and  for any $j \in J$, we let $\omega_j=\nu_j=j$. Thus, $ \{\omega_j:j\in J\}=\{\nu_j:j\in J\}=B_n.$

We shall break the proof of the proposition into several steps. First, we verify that $\Omega$ and $\Omega_n$ are $(F,G,H)$-metrized quantum vector bundles. Then, we verify that $\lambda(\gamma_n)\rightarrow0$ as $n\rightarrow\infty$ by bounding the basic reach $\varrho_b(\gamma_n)$, the height $\varsigma(\gamma_n)$,  the modular reach $\varrho^{\#}(\gamma_n)$ and the imprint $\varpi(\gamma_n)$ (see \cite[Definition 4.18, 4.10, 4.12, 4.14, and 4.15]{Modular} for the definitions of $\lambda$,  $\varrho_b$,  $\varsigma$,  $\varrho^{\#}$,   $\varpi$, respectively).

  \noindent \textbf{Step 1: We first show that $\Omega$ and $\Omega_n$ are $(F,G,H)$-metrized quantum vector bundles.} We show this for $\Omega$ and the result for $\Omega_n$ follows immediately. 
  
 First,  $\norm{\omega}\leq D(\omega)$ for all  $\omega \in I$  by definition of $D$. Next,  By \cite[Theorem 1.9]{Rieffel} and Theorem \ref{t:af-lip}, the set  $\{\omega\in A: \norm{\omega} \leq 1 , L(\omega)\leq 1\}$ 
        is compact. Note that 
        $$\left\{\omega\in A: \sup_{n\in \N}\left\{\frac{\norm{\omega-\omega 1_n}}{\beta(n)} \right\}\leq 1\right\}$$
        is closed, $I$ is closed and the intersections of compact sets with closed sets are compact. Thus, $B$ is compact. 
        Next, let $a \in sa(A)$ and  $\omega \in I$. We have
\begin{align*}
 L(a\omega) &\leq  2(\norm{a}L(\omega)+\norm{\omega}L(a))  \leq  2(\norm{a} D(\omega) + L(a)D(\omega))\\
 &= F(\norm{a}, L(a), D(\omega),D(\omega) )=G(\norm{a}, L(a), D(\omega)).
 \end{align*}
    Moreover, 
  \[
    \norm{a\omega} \leq  \norm{a}\norm{\omega} \leq \norm{a} D(\omega) \leq   2(\norm{a}+L(a))D(\omega)  =  G(\norm{a}, L(a), D(\omega))\]
    Similarly, for any $n \in \N$
    \[
    \frac{\norm{a\omega-a\omega1_n}}{\beta(n)} \leq  \norm{a}\cdot \frac{\norm{\omega-\omega 1_n}}{\beta(n)} \leq   \norm{a}D(\omega) \leq   G(\norm{a}, L(a), D(\omega)).
   \]
Therefore, we have that $D(a\omega) \leq G(\norm{a}, L(a), D(\omega))$. 
Next, let $\omega, \nu \in I$.  Since $L$ is *-preserving as conditional expectations are positive, we have
\begin{align*}
 L(\langle \omega, \nu\rangle)
&=  L(\omega\nu*) \leq   2(\norm{\omega}L(\nu^*)+\norm{\nu^*}L(\omega)) =  2(\norm{\omega}L(\nu)+\norm{\nu}L(\omega))\\
&\leq   2 (D(\omega)D(\nu)+D(\omega)D(\nu))=H(D(\omega), D(\nu)).
\end{align*}
Again, as $L$ is *-preserving, we have 
\[\max\{L(\R\langle \omega, \nu \rangle), L(\I\langle \omega, \nu \rangle)\} \leq L(\langle \omega, \nu\rangle) \leq H(D(\omega), D(\nu)).\] 
   
\noindent \textbf{Step 2: bounding the height and the basic reach.}
 The basic bridge (\cite[Definition 4.8]{Modular}) for $\gamma_n$ is  
$${\gamma_n}_b=(A,1_A,\iota_n,I_A).$$
 However, this is exactly the bridge in the proof of \cite[Theorem 3.5]{Aguilar}. Thus, from that proof, we have  $\varsigma(\gamma)=\varsigma({\gamma_n}_b)=0$ and  $\varrho({\gamma_n}_b)\leq\beta(n)$.

\item \textbf{Step 3: bounding the imprint.}

The \textit{modular Monge-Kantorvich metric} (\cite[Definition 3.23]{Modular}) for our $\Omega$ is
$$k_\Omega(\omega,\nu)=\sup\{\norm{\omega^*\xi-\nu^*\xi}: \xi\in B\}$$
for any $\omega,\nu \in I$ (note that the author of \cite{Modular} uses $k_D$ and $k_\Omega$ interchangeably).  
The \textit{modular Monge-Kantorvich metric} for our $\Omega_n$ is
$$k_{\Omega_n}(\omega,\nu)=\sup\{\norm{\omega^*\xi-\nu^*\xi}: \xi\in B_n\}$$
for any $\omega,\nu \in I_n$. 
Note 
$$\norm{\omega^*\xi-\nu^*\xi}\leq \norm{\omega-\nu}\norm{\xi}\leq \norm{\omega-\nu}$$
for any $\xi \in B$. Thus, $k_{\Omega_n}(\omega,\nu)\leq \norm{\omega-\nu}$ and $k_{\Omega}(\omega,\nu)\leq \norm{\omega-\nu}$. 
The imprint $\varpi(\gamma_n)$ is 
$$\varpi(\gamma_n) = \max\{\hs_{k_{\Omega_n}}(\{\omega_j:j\in J\}, B_n), \hs_{k_{\Omega}}(\{\nu_j:j\in J\}, B)\}.$$
Note that 
$$\hs_{k_{\Omega_n}}(\{\omega_j:j\in J\}, B_n)= \hs_{k_{\Omega_n}}(B_n, B_n)=0$$
and 
$$\hs_{k_{\Omega}}(\{\nu_j:j\in J\}, B_n)= \hs_{k_{\Omega}}(B_n, B)\leq \hs_{\norm{\cdot}}(B_n, B).$$
Our goal now is to show that 
$$\lim_{n\rightarrow\infty}\hs_{\norm{\cdot}}(B_n, B)=0.$$
Note that $\sup_{b\in B_n}d(b,B) =0$ as $B_n \subseteq B$. 
Thus, 
$$\hs_{\norm{\cdot}}(B_n, B)=\max\{\sup_{b\in B_n}d(b,B),\sup_{b\in B}d(b,B_n)\}=\sup_{b\in B}d(b,B_n).$$
Let $b\in B$, then 
$$\norm{b}\leq 1, L(  b)\leq 1 \quad \text{and} \quad  \sup_{m\in \N}\left\{\frac{\norm{b-b1_m}}{\beta(m)}\right\}\leq 1.$$
 Thus, for any $m\in \N$, $\norm{b-E_m(b)}\leq \beta(m)$ and  $\norm{b-b1_m}\leq \beta(m)$. 
Note that $E_m(b1_m)=E_m(b)1_m\in I_m$ as $E_m$ is a conditional expectation on $A_m$ and  $1_m \in I_m$. Next, we shall bound $D(E_n(b)1_n)$ for all $n \in \N$. Let $n \in \N.$ Then
      $$\norm{E_n(b1_n)}\leq \norm{b1_n}\leq \norm{b}\norm{1_n} \leq 1$$
    since $1_n$ is a projection. Fix $m \in \N$, 
    if $m\geq n$,$$\norm{E_n(b)1_n-E_m(E_n(b)1_n)}=0.$$
    If $m< n$, 
    \begin{align*}
     \norm{E_n(b1_n)-E_m(E_n(b1_n))} 
    &= \norm{E_n(b1_n)-E_n(E_m(b1_n))} \\&=  \norm{E_n(b1_n-E_m(b1_n))} \leq  \norm{b1_n-E_m(b1_n)}\\
    &\leq  \norm{b-b1_n}+\norm{b-E_m(b)}+\norm{E_m(b)-E_m(b1_n)}\\
    &\leq  \norm{b-b1_n}+\norm{b-E_m(b)}+\norm{b-b1_n}\\
    &\leq  2\beta(n)+\beta(m)
    \end{align*}
    Fix $m\in \N$, if $m\geq n$,
    $$\norm{E_n(b1_n)-E_n(b)1_n1_m}=0.$$
If $m<n$, 
    \begin{align*}
     \norm{E_n(b1_n)-E_n(b)1_n1_m} 
    &=  \norm{E_n(b1_n)-E_n(b)1_m}\\
    &\leq  \norm{E_n(b)-E_n(b1_n)}+ \norm{E_n(b)-E_n(b1_m)}\\
    &\leq  \norm{b-b1_n}+ \norm{b-b1_m}\\
    &\leq  \beta(n)+\beta(m)< 2\beta(n)+\beta(m)
    \end{align*} 
Put $x'_n=\max_{m<n}\left\{\frac{2\beta(n)+\beta(m)}{\beta(m)}\right\}$ and 
 $x_n=\max\{1+\beta(n), x'_n\}.$  
Note that  $x_n \rightarrow 1$ as $n\rightarrow \infty$  by Expression \eqref{eq:beta}. Moreover, $x_n$ is independent of our choice of $b$. 
Then, we have that $D(E_n(b1_n))\leq x_n$. Thus,  
$$\frac{E_n(b)1_n}{x_n}\in B_n.$$
Finally, 
\begin{align*}
 \left\|b-\frac{E_n(b)1_n}{x_n}\right\| 
& \leq   \left\|E_n(b)1_n-\frac{E_n(b)1_n}{x_n}\right\|+\norm{b-E_n(b)1_n}\\
&\leq   (x_n-1)+\norm{E_n(b1_n)-E_n(b)}+\norm{E_n(b)-b}\\
&\leq   (x_n-1)+\norm{b1_n-b}+\norm{E_n(b)-b} \leq  (x_n-1)+2\beta(n).
\end{align*}
Thus, we have that $$d(b, B_n) \leq (x_n-1)+2\beta(n).$$ And so   $\hs_{k_\Omega}(B_n,B)\leq (x_n-1)+2\beta(n) $ 
which tends to zero as $n\rightarrow \infty$. 
As a result, 
$$\varpi(\gamma_n)\rightarrow 0 \text{ as } n\rightarrow \infty.$$
\item \textbf{Step 4: bounding the modular reach.}
Note that for any $j\in J$, we have that $\omega_j=\nu_j$ so the modular reach
$$\varrho^{\#}(\gamma_n)=\max\{\textbf{dn}_\gamma(\omega_j,\nu_j)\}=0$$ 
(for $\textbf{dn}_\gamma$ see \cite[Definition 4.13]{Modular}). Thus, from \cite[Definition 4.18 and 4.16]{Modular},
\begin{align*}
\lambda(\gamma_n)&= \max\{\varsigma(\gamma_n),\varrho(\gamma_n)\} = \max\{\varsigma(\gamma_n),\varrho_b(\gamma_n),\varrho^{\#}+\varpi(\gamma_n)\}\\
&= \max\{\varrho_b(\gamma_n),\varpi(\gamma_n)\} \leq \max\{\beta(n),x_n-1\}
\end{align*}
Thus, $\lambda(\gamma_n)\rightarrow0$ as $n\rightarrow\infty$. Note that
$$\Lambda^{mod}(\Omega_n,\Omega)\leq \lambda(\gamma_n)$$
by \cite[Definition 5.6]{Modular} since any bridge is a trek. 
Thus, $\Lambda^{mod}(\Omega_n,\Omega)\rightarrow 0$ as $n\rightarrow\infty$.
\end{proof}

Given these finite-dimensional approximations, we now prove that for unital AF-algebras equipped with a faithful tracial state, convergence in Fell topology implies convergence in the modular propinquity equipped with the metrized quantum vector bundle structure of the previous proposition.

\begin{Theorem}\label{t:main-af}
Using the setting of Proposition \ref{idealnAF}, we have that the map
\[
I \in Ideals(A) \longmapsto \Omega^\beta_I  
\]
is continuous with respect to the Fell topology and the topology induced by the modular Gromov-Hausdorff propinquity.
\end{Theorem}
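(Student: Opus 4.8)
The plan is to leverage Proposition \ref{idealnAF} together with the completeness of the modular propinquity (or simply its triangle inequality) to upgrade the finite-dimensional approximation result into a genuine continuity statement. So suppose $(I_k)_{k\in\N}$ is a sequence in $Ideals(A)$ converging to $I$ in the Fell topology; I want to show $\Lambda^{mod}(\Omega^\beta_{I_k},\Omega^\beta_I)\to 0$. By the triangle inequality for $\Lambda^{mod}$, it suffices to interpose the finite-dimensional approximants: for each $k$ and each $n$,
\[
\Lambda^{mod}(\Omega^\beta_{I_k},\Omega^\beta_I)\leq \Lambda^{mod}(\Omega^\beta_{I_k},\Omega^\beta_{(I_k)_n})+\Lambda^{mod}(\Omega^\beta_{(I_k)_n},\Omega^\beta_{I_n})+\Lambda^{mod}(\Omega^\beta_{I_n},\Omega^\beta_I),
\]
where $(I_k)_n=I_k\cap A_n$ and $I_n=I\cap A_n$. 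The first and third terms are controlled \emph{uniformly in the ideal} by Proposition \ref{idealnAF}: the explicit bound $\max\{\beta(n),x_n-1\}$ depends only on the sequence $(\beta(m))_m$ and not on the ideal, so both terms are at most $\varepsilon_n:=\max\{\beta(n),x_n-1\}$, which tends to $0$ as $n\to\infty$. Thus the whole problem reduces to the middle term: fixing $n$, I must show that if $I_k\to I$ in the Fell topology then $(I_k)_n=I_k\cap A_n$ eventually "equals" (is isometrically isomorphic to, with matching D-norms) $I_n=I\cap A_n$, so that $\Lambda^{mod}(\Omega^\beta_{(I_k)_n},\Omega^\beta_{I_n})=0$ for $k$ large.

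The key finite-dimensional fact is that $A_n$ is finite-dimensional, hence has only finitely many ideals, and that the map $I\mapsto I\cap A_n$ is "locally constant" with respect to the Fell topology in the following sense: for each ideal $J$ of $A_n$, the set $\{I\in Ideals(A): I\cap A_n = J\}$ is Fell-open. I would verify this using the concrete description of the Fell topology on $Ideals(A)$ (equivalently on the space of primitive-ideal-closed subsets, or via convergence of approximate identities / Hausdorff convergence of unit balls): if $1_J$ denotes the unit of the finite-dimensional ideal $J$ of $A_n$, then the condition $I\cap A_n=J$ is equivalent to $1_J\in I$ together with $A_n\cap I\subseteq J$; since $A_n$ is finite-dimensional, both are open conditions in the Fell topology because Fell convergence $I_k\to I$ forces the canonical approximate identities $1^{(k)}_m\to 1_m$ in norm for each fixed level $m$ (this is implicit in the AF-setting machinery and in \cite{Aguilar18}), and a norm-convergent sequence of projections is eventually equal to its limit. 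Hence for $k$ large, $(I_k)_n=I_n$ as subspaces of $A_n$, the Hilbert-module structures $\langle\omega,\nu\rangle=\omega\nu^*$ agree, and the D-norms $D_{(I_k)_n}$ and $D_{I_n}$ are literally the same function (they are both the restriction of the ideal-independent formula, which on $I_n$ only involves $L_\beta$, $\|\cdot\|_A$, and the approximate identity $1_m=1_n$ for $m\geq n$), so $\Omega^\beta_{(I_k)_n}=\Omega^\beta_{I_n}$ and the middle term vanishes.

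Putting it together: given $\varepsilon>0$, pick $n$ with $2\varepsilon_n<\varepsilon$; then pick $K$ so that for $k\geq K$ the middle term is $0$; then $\Lambda^{mod}(\Omega^\beta_{I_k},\Omega^\beta_I)\leq 2\varepsilon_n<\varepsilon$ for all $k\geq K$. Since $Ideals(A)$ with the Fell topology is metrizable (as $A$ is separable), sequential continuity gives continuity, completing the proof.

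**The main obstacle** I anticipate is the rigorous justification that $I\mapsto I\cap A_n$ is locally constant in the Fell topology — i.e. pinning down exactly which characterization of the Fell topology to use and checking that Fell convergence of ideals really does imply norm-convergence (hence eventual equality) of the level-$n$ units $1_n$. This is the one place where a genuine topological argument is needed rather than an appeal to Proposition \ref{idealnAF}; everything else is bookkeeping with the triangle inequality and the uniform bound. A secondary, more cosmetic point is confirming that all the $\Omega^\beta_{I_k}$, $\Omega^\beta_{(I_k)_n}$ live in a common class of $(F,G,H)$-metrized quantum vector bundles with the \emph{same} admissible triple so that $\Lambda^{mod}$ is defined between them and satisfies the triangle inequality — but this is immediate since $(F,G,H)$ in Proposition \ref{idealnAF} depends only on $F$ from Theorem \ref{t:af-lip}, which is fixed once $(\beta(m))_m$ is fixed.
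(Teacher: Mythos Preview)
Your proposal is correct and follows essentially the same route as the paper: triangle inequality through the finite-dimensional approximants, the uniform (ideal-independent) bound from Proposition~\ref{idealnAF} on the outer two terms, and the middle term vanishing once $(I_k)_n=I_n$ for large $k$. The paper dispatches this last point by a direct citation to \cite[Lemma~3.24]{Aguilar16c}, which is exactly the ``$I\mapsto I\cap A_n$ is locally constant in the Fell topology'' statement you isolate as the main obstacle.
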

\begin{proof}
Since the Fell topology is compact metrizable for separable unital C*-algebras  (see \cite[Lemma 3.19]{Aguilar16c}), we may prove continuity by sequential continuity.  Denote $\overline{\N}=\N\cup \{\infty\}$.
Let $(I^{n})_{n \in \overline{\N}}$ be a sequence in $Ideals(A)$ that converges in the Fell topology to  $I^\infty$. Let $\epsilon>0$. By Proposition \ref{idealnAF}, there exists $N \in \N$ such that $ \max\{\beta(N), x_N-1\}<\epsilon,$ and thus
\[
\Lambda^{mod}\left(\Omega^\beta_{I^n_N}, \Omega^\beta_{I^n} \right)\leq \max\{\beta(N), x_N-1\}<\epsilon
\]
for all $n \in \overline{\N}.$ Next, by \cite[Lemma 3.24]{Aguilar16c}, there exists $N' \in \N$ such that  for all $n \geq N'$
\[
I^n_N=I^\infty_N.
\]
Let $n \geq N'$. 
Hence, by construction of $D_{I^\infty_N}$ and $D_{I^n_N}$, we have that $
\Omega^\beta_{I^n_N}=\Omega^\beta_{I^\infty_N}.
$
Thus, by the triangle inequality for $\Lambda^{mod}$, we have
\begin{align*}
     \Lambda^{mod}\left(\Omega^\beta_{I^n}, \Omega^\beta_{I^\infty}\right)  & \leq  \Lambda^{mod}\left(\Omega^\beta_{I^n}, \Omega^\beta_{I^n_N}\right)+\Lambda^{mod}\left(\Omega^\beta_{I^n_N}, \Omega^\beta_{I^\infty_N}\right)+\Lambda^{mod}\left(\Omega^\beta_{I^\infty_N}, \Omega^\beta_{I^\infty}\right) \\
  &  <\epsilon+0+\epsilon=2\epsilon.\qedhere 
\end{align*}
\end{proof}

\section{The commutative case}\label{s:comm}
Considering the commutative case is natural due to the desire to see how the modular propinquity behaves with respect to C*-algebraic structure. However, there's another result that motivated this pursuit. Indeed, given a compact metric space $(X,d)$, denote the set of closed subsets of $X$ by $cl(X)$. It holds that the map
\begin{equation}\label{eq:comm-case}
F \in cl(X) \longmapsto (C(F), L_d)
\end{equation}
is continuous with respect to the topology induced by the Hausdorff distance and the Gromov-Hausdorff distance of \Latremoliere{} (see \cite[Theorem 6.6]{Latremoliere13} for a more general result). On the other hand, the map
\[
F \in cl(X) \longmapsto I_F:=\{f\in C(X): \forall x \in F, f(x)=0\} \in Ideals(C(X))
\]
is homeomorphism onto $Ideals(C(X))$ with respect to the topology induced by the Hausdorff distance and the Fell topology (this is a classical result, but we provide a proof of this in the following proof). Thus, the following result somewhat complements the continuity result of Expression \eqref{eq:comm-case}, and this is also displayed in the following proof since we are essentially approximating families continuous of functions that vanish on different closed sets rather than the families of continuous functions defined on those different closed sets.

\begin{Theorem}\label{t:comm-mod}
Let $(X,d)$ be a compact metric space.  Let $I \in Ideals(C(X))$. For each $ f\in I$, we define
\[
D^d_I(f)=\max\{\|f\|_{C(X)}, L_d(\R f), L_d( \I f)\}
\]
using the setting of Theorem \ref{t:cx-lip}.  
For each $f,g \in I$, we define 
\[
\langle f,g\rangle= fg^*.
\]
Let $G(x,y,z)= F(x,z,y,z)$ and $H(x,y)= F(x,y,x,y)$ for all $(x,y,z)\in [0, \infty)^3$ with $F$ as in Theorem \ref{t:cx-lip}. Then, $(F,G,H)$ is an admissible triple.

It holds that $\Omega^d_I=(I, \langle \cdot, \cdot \rangle, D^d_I, C(X), L_d)$ is an $(F,G,H)$-metrized quantum vector bundle.
Moreover, the map
\[
I \in Ideals (C(X))\longmapsto \Omega^d_I
\]
is continuous with respect to the Fell topology and the topology induced by the modular Gromov-Hausdorff propinquity. 
Furthermore, using notation introduced before the theorem, we have that
\[
F \in  cl(X)  \longmapsto \Omega^d_{I_F}
\]
is continuous with respect to the topology induced by the Hausdorff distance and the topology induced by the modular Gromov-Hausdorff propinquity.
\end{Theorem}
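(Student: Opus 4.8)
The plan is to treat the three assertions in sequence, the first two of which parallel the AF case of Proposition~\ref{idealnAF} but with a crucial difference: there is no analogue of the finite-dimensional subalgebras $A_n$, so the finite-dimensional approximation argument is unavailable. Instead, I would prove convergence in $\Lambda^{mod}$ directly by building, for two Fell-close ideals, a single bridge with small length.

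First I would verify that $\Omega^d_I$ is an $(F,G,H)$-metrized quantum vector bundle. Conditions (1) and (2) of Definition~\ref{d:mqvb} reduce to checking $\|f\|_{C(X)}\le D^d_I(f)$ (immediate from the $\max$) and that $\{f\in I: D^d_I(f)\le 1\}$ is $\|\cdot\|_{C(X)}$-compact; the latter follows because this set is a norm-closed subset (using that $I$ is closed and $L_d$ is lower semicontinuous) of the set $\{f\in C(X): \|f\|\le 1, L_d(\R f)\le 1, L_d(\I f)\le 1\}$, which is compact by the Arzelà--Ascoli-type argument underlying Theorem~\ref{t:cx-lip} (cf.\ \cite[Theorem 1.9]{Rieffel}). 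Conditions (3) and (4) are the Leibniz-type estimates: for $a\in sa(C(X))$, $D^d_I(af)\le G(\|a\|, L_d(a), D^d_I(f))$ follows from the pointwise product rule $L_d(ag)\le \|a\|L_d(g)+\|g\|L_d(a)$ applied to $g=\R f, \I f$, together with $\|af\|\le\|a\|\|f\|$, and similarly the inner-product bound $\max\{L_d(\R\langle f,g\rangle), L_d(\I\langle f,g\rangle)\}\le H(D^d_I(f), D^d_I(g))$ follows by expanding $fg^*$ into real and imaginary parts and using the product rule; here one must check these collapse into the specified $G(x,y,z)=F(x,z,y,z)$ and $H(x,y)=F(x,y,x,y)$ with $F(x_1,x_2,x_3,x_4)=x_1x_4+x_2x_3$, which is routine.

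For the continuity statement on $Ideals(C(X))$, I would again exploit that the Fell topology is compact metrizable (as $C(X)$ is separable) and argue sequentially. Given $I^n\to I^\infty$ in the Fell topology, using the homeomorphism $F\mapsto I_F$ this corresponds to closed sets $X_n\to X_\infty$ in the Hausdorff distance. The idea is to construct for each pair $(I^n, I^\infty)$ a bridge $\gamma_n$ in the sense of \cite[Definition 4.4]{Modular} with pivot $C(X)$, unit $1_{C(X)}$, and with the families of anchors/coanchors drawn from the $D$-balls of $I^n$ and $I^\infty$; the key quantitative input is that, because $X_n$ and $X_\infty$ are $\delta_n$-Hausdorff-close with $\delta_n\to 0$, one can use McShane/Whitney-type Lipschitz extensions to move a function vanishing on $X_\infty$ (with controlled $D$-norm) to one vanishing on $X_n$ (with $D$-norm inflated by a factor tending to $1$) while changing it by at most $O(\delta_n)$ in sup norm, and symmetrically. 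This simultaneously controls the height $\varsigma(\gamma_n)$ (via the bridge for $(C(X),L_d)$ of \cite[Theorem 6.6]{Latremoliere13}), the basic reach $\varrho_b(\gamma_n)$, the imprint $\varpi(\gamma_n)$ (this is the Hausdorff-distance-between-$D$-balls estimate, analogous to Step 3 of the previous proof), and the modular reach $\varrho^\#(\gamma_n)$ (made small or zero by choosing anchors equal to coanchors after transport). Finally, the last assertion is immediate: since $F\mapsto I_F$ is a homeomorphism from $(cl(X),\hs_d)$ onto $(Ideals(C(X)),\text{Fell})$ (a classical fact, to be recalled with a short proof that Hausdorff convergence $X_n\to X_\infty$ is equivalent to $I_{X_n}\to I_{X_\infty}$ in the Fell topology), continuity of $I\mapsto\Omega^d_I$ composed with this homeomorphism gives continuity of $F\mapsto\Omega^d_{I_F}$.

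The main obstacle I anticipate is the transport-of-functions step: unlike the AF case, where the conditional expectations $E_n$ and the approximate units $1_n$ gave a clean finite-dimensional target and explicit norm estimates, here one must produce a Lipschitz function vanishing exactly on $X_n$ from one vanishing exactly on $X_\infty$ with simultaneous control of (a) the sup-norm change, (b) the Lipschitz constants of real and imaginary parts, and (c) the sup norm itself, and one needs the inflation factor to be uniform over the $D$-ball and to tend to $1$. A natural device is to multiply by a Lipschitz bump that is $0$ on $X_n$ and $1$ outside a $\delta_n$-neighborhood (e.g.\ $x\mapsto \min\{1, d(x,X_n)/\delta_n\}$), estimate its Lipschitz constant as $1/\delta_n$, and absorb the resulting large Lipschitz contribution using that the original function is already $O(\delta_n)$-small near $X_\infty\supseteq$ most of $X_n$; making this quantitative and checking it feeds correctly into the definitions of $\varrho_b$, $\varpi$ and $\varrho^\#$ is where the real work lies.
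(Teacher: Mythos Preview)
Your proposal is essentially on the same track as the paper's proof, but you over-complicate the bridge and your alternative ``bump function'' device in the final paragraph would not work as stated.

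On the bridge: since both $I^n$ and $I^\infty$ are modules over the \emph{same} base $(C(X),L_d)$, the paper simply takes both $*$-homomorphisms to be $id_{C(X)}$ and the pivot to be $1_{C(X)}$; this makes the height and the basic reach \emph{identically zero}, with no appeal to \cite[Theorem~6.6]{Latremoliere13} needed. Likewise, the paper takes the anchor and co-anchor families to be literally the same set $B_n=\{f\in I^n: D(f)\le 1\}$ (each $\omega_j=\nu_j=j$), so the modular reach is exactly zero as well. Thus the \emph{only} quantity requiring work is the imprint $\varpi(\gamma_n)=\hs_{k_D}(B_n,B)\le \hs_{\|\cdot\|}(B_n,B)$, i.e.\ exactly the ``transport-of-functions'' estimate you isolate.

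On that estimate: your first instinct (McShane/Whitney extension) is what the paper uses, but your fallback device $\phi_n(x)=\min\{1,d(x,X_n)/\delta_n\}$ fails. The Leibniz rule gives $L_d(\R(f\phi_n))\le \|\R f\|_\infty L_d(\phi_n)+L_d(\R f)\|\phi_n\|_\infty$, and the first term is of order $1/\delta_n$; the fact that $f$ is $O(\delta_n)$ \emph{near} $X_\infty$ does not help because the bound uses the global sup norm. The paper's fix is different: set $G_n=\{x:d(x,F_n\cup F)<\epsilon\}$ and on the closed set $F\cup F_n\cup G_n^c$ define $f_1=\frac{1}{1+\epsilon}\R f\cdot\chi_{G_n^c}$. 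One checks directly (three cases for the positions of $x,y$) that $L_d(f_1)\le 1$ on this closed set---the $\frac{1}{1+\epsilon}$ factor is exactly what absorbs the cross-boundary slope---and then McShane extends $f_1$ to all of $X$ preserving both the Lipschitz constant and the sup norm. Doing the same for $\I f$ and combining yields $h\in B$ with $\|f-h\|\le 8\epsilon$. This is the missing concrete mechanism behind your ``inflation factor tending to $1$''.
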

\begin{proof}
 In this proof, we denote $\|\cdot\|_{C(X)}$ by $\|\cdot\|$, $L_d$ by $L$, and $D^d_I$ by $D$, which causes no confusion since each of these quantities are defined on all of $C(X)$.

 \noindent \textbf{Step 1: for all $I \in Ideals(C(X))$, we   show that $\Omega^d_I$ is a $(F,G,H)$-metrized quantum vector bundle.} 
Let $I \in Ideals(C(X))$.    First,  $\norm{\omega} \leq D (\omega)$ for all  $\omega \in I$  by definition of $D$. Next,
        by \cite[Theorem 1.9]{Rieffel} or the Arzel\`a-Ascoli Theorem, the set  
         $\{\omega\in C(X): \norm{\omega} \leq 1 , L (\omega)\leq 1\}$ 
        is compact. Note that   $I$ is closed and the intersection  of compact sets with closed sets are compact. Thus, $\{f \in I: D (f)\leq 1\}$ is compact. First, we note that $(F,G,H)$ is admissible by definition. Next, let $a \in sa(A)$ and  $\omega \in I$. We have since $L$ is *-preserving,
\begin{align*}
  \max \{ L(\R a\omega), L(\I a\omega)\} 
 &\leq  L(a\omega)  \leq     F(\|a\|, \|\omega\|, L(a), L(\omega))\\
 & \leq     F(\|a\|, D(\omega), L(a),  D(\omega))  =   G(\norm{a}, L(a), D(\omega))
 \end{align*} 
 since $L$ satisfies the Leibniz rule. Moreover, 
    \[
    \norm{a\omega}  \leq  \norm{a}\norm{\omega} \leq  \norm{a} D(\omega) \leq   (\norm{a}+L(a))D(\omega) =  G(\norm{a}, L(a), D(\omega))\]
   
Therefore, we have that $D(a\omega) \leq G(\norm{a}, L(a), D(\omega))$. Next, let $\omega, \nu \in I$   Since $L$ is *-preserving, we have
\begin{align*}
 L(\langle \omega, \nu\rangle)
&=  L(\omega\nu*) \leq    \norm{\omega}L(\nu^*)+\norm{\nu^*}L(\omega)  =    \norm{\omega}L(\nu)+\norm{\nu}L(\omega) \\
&\leq    D(\omega)D(\nu)+D(\omega)D(\nu) =H(D(\omega), D(\nu)).
\end{align*}
Again, as $L$ is *-preserving, we have 
$\max\{L(\R\langle \omega, \nu \rangle), L(\I\langle \omega, \nu \rangle)\} \leq L(\langle \omega, \nu\rangle) \leq H(D(\omega), D(\nu)).$

\noindent \textbf{Step 2: We now prove continuity.} 
Since the Fell topology is compact metrizable for separable unital C*-algebras  (see \cite[Lemma 3.19]{Aguilar16c}), we may prove continuity by sequential continuity.  Let $(I_n)_{n \in \N}$ be a sequence in $Ideals(C(X))$ that converges with respect to the Fell topology to some $I \in Ideals(C(X)).$

Let $n \in \N$. 
Put  $B_{ n}=\{f \in I_n: D(f) \leq 1\}$ and $B =\{f \in I: D(f) \leq 1\}$. 
Let $$\gamma_n=\{\Omega_n, \Omega,C(X), 1_{C(X)}, id_{C(X)}, id_{C(X)}, \{\omega_j\}_{j\in J}, \{\nu_j\}_{\nu\in J} \}$$ be a bridge from $\Omega_n$ to $\Omega$ where 
 $id_{C(X)}:C(X)\rightarrow C(X)$ is the identity map from $C(X)$ to itself,    $J=B_n$ and  $\omega_j=j=\nu_j$ for $j\in B_n$.

By construction, the height and the basic reach are zero, as $I_n$ and $I$ are modules of the same $C^*$-algebra. Next, the modular reach is 
$$\max\{\dn_\gamma(\omega_j, \nu_j):j\in J \}=0.$$  as $\omega_j=\nu_j$ for $j \in J$. Thus, we only need to bound the imprint
$$\varpi(\gamma_n)=\max\{\hs_{k_D}(\{\omega_j:j\in J\}, B_{ n}), \hs_{k_D}(\{\nu_j:j\in J\}, B )\}$$
(note that the author of \cite{Modular} uses $k_D$ and $k_\Omega$ interchangeably). 
Note that 
$$\{\omega_j:j\in J\}=\{\nu_j:j\in J\}=B_n.$$
Thus, $\varpi(\gamma_n)= \hs_{k_D}(B_{ n}, B )\leq \hs_{\|\cdot\|}(B_{ n}, B )$.

Let $F_n$ be the closed subset of $X$ that corresponds to $I_n$ i.e 
$$F_n=\{x \in X: f(x)=0 \text{ for all } f \in I_n\}$$
where $I_n=\{f \in C(X): f(x)=0 \text{ for all } x \in F_n\}$, 
and let $F$ be the closed subset of $X$ that corresponds to $I$, and note that  $(I_n)_{n \in \N}$ converges to $I$ in Fell topology if and only if $(F_n)_{n \in \N}$  converges to $F$ in the Hausdorff distance with respect to $d$. Indeed, given any unital C*-algebra $A$, the Fell topology on $ Ideals (A)$ of \cite{Fell61} is given by the Fell topology of \cite{Fell62} on the closed subsets of the primitive ideal space on $A$ equipped with  the Jacobson topology along with  the canonical one-to-one correspondence between $Ideals(A)$ and closed subsets of primitive ideals in the Jacobson topology. Moreover, in our current setting, the primitive ideals of $C(X)$ are of the form $I_{\{x\}}=\{f \in C(X): f(x)=0\}$ for all $x \in X$,   and the set of primitive ideals $\{I_{\{x\}} \subseteq C(X) : x \in X\}$ equipped with the Jacobson topology  is homeomorphic to $(X,d)$ (this is a well-known result, which follows from \cite[Proposition 4.3.3]{Pedersen79}). Finally, the Fell topology of \cite{Fell62} on the closed subsets of $(X,d)$ is homeomorphic to the topology on the closed subsets of $(X,d)$ induced by the Hausdorff distance by \cite[Corollary 5.1.11]{Beer}.

Thus, let $\epsilon>0$ and without loss of generality  assume that $\epsilon<1$, then  there exists $N \in \N$ such that for any $n\geq N$, we have 
$$\hs(F_n, F) < \epsilon^2.$$
Let $n \geq N.$ Let $f \in B_{ n}$. Then, $f(x)=0$ for  $x \in F_n$, $L(\R f)\leq 1$, $L(\I f)\leq 1$, and $\norm{f}\leq 1$.  Define
$$G_n=\{x\in X: d(x, F_n \cup F)<\epsilon\}.$$
Then $G_n^c$ is closed.
Note that $(F\cup F_n)\cap G_n^c=\emptyset$, and thus we may define a real-valued function $f_1$ on $F \cup F_n \cup G_n^c$ by 
$$f_1 =  \frac{1}{1+\epsilon}\R f\cdot \chi_{G_n^c}  $$
where  $\sup_{x \in F\cup F_n \cup G_n^c}|f(x)|\leq \norm{f}  \leq 1$.

\noindent\textbf{Step 2a: we show $L(f_1) \leq 1$.}
Let us discuss the following three cases. 
 First, $x,y \in F\cup F_n$.
    By the definition of $f_1$, $f_1(x)=f_1(y)=0$, so $\frac{|f_1(x)-f_1(y)|}{d(x,y)} =0.$

Second, $x,y \in G_n^c$. 
    $$\frac{|f_1(x)-f_1(y)|}{d(x,y)} =\frac{1}{1+\epsilon}\cdot \frac{|\R f(x)-\R f(y)|}{d(x,y)}\leq 1.$$

Third,    $x \in F \cup F_n$ and $y\in G_n^c$. By the definition of $G_n$, $d(x,y) \geq \epsilon$. As $\hs_d(F_n, F) < \epsilon^2$, there exists $z\in F_n$ such that $d(x,z)\leq \epsilon^2$, and so $\frac{d(x,z)}{d(x,y)}\leq \epsilon$. As $L(\R f) \leq 1$, we have 
    $$\frac{|\R f(y)|}{d(y,z)}=\frac{|\R f(y)-\R f(z)|}{d(y,z)}\leq 1.$$
    Thus, $|\R f(y)|\leq d(y,z)$. 
    Therefore,  
    \begin{align*}\frac{|f_1(x)-f_1(y)|}{d(x,y)} &= \frac{|f_1(y)|}{d(x,y)} = \frac{1}{1+\epsilon}\cdot\frac{|\R f(y)|}{d(x,y)}  \leq  \frac{1}{1+\epsilon}\cdot \frac{d(y,z)}{d(x,y)}\\
    &\leq  \frac{1}{1+\epsilon}\cdot \frac{d(y,x)+d(x,z)}{d(x,y)} \leq  \frac{1}{1+\epsilon}\left( 1+\frac{d(x,z)}{d(x,y)}\right) 
    \leq 1.
    \end{align*} 
 
Note that $f_1$ is Lipschitz function such that  $L(f_1)\leq 1$ and is defined on the closed subset $F\cup F_n \cup G_n^c$. Thus,
we can apply \cite[Theorem 1]{mcshane} and \cite[Corollary 2]{mcshane} to extend   $f_1$ to a real-valued function $g_1$ defined on $X$ that has the same Lipschitz constant and  norm as $f$ such that:  $g_1$ is  zero on $F\cup F_n$,  $g_1=\frac{1}{1+\epsilon}\R f$ on $G_n^c$, 
      $L(g_1)\leq 1$,  $\norm{g_1} \leq \norm{\R f}$. 
Similarly, we can define 
  $f_2$ on $F \cup F_n \cup G_n^c$ as 
$$f_2 =  \frac{1}{1+\epsilon}\I f \cdot \chi_{G_n^c}$$
such  that $L(f_2) \leq 1$ and obtain a real-valued extension $g_2$ of $f_2$ to $X$ such that   $g_2$ is   zero on $F\cup F_n$, 
     $g_2=\frac{1}{1+\epsilon}\I f$ on $G_n^c$, 
      $L(g_2)\leq 1$, and $\norm{g_2} \leq \norm{\I f}.$ Put $h=g_1+ig_2$.

\noindent\textbf{Step 2b: we show  
$h \in B $.} By the definition of $h$, $L(\R h)\leq 1$ and $L(\I h) \leq 1$. 
Moreover $$\norm{h}\leq \sqrt{\norm{g_1}^2+\norm{g_2}^2}\leq \sqrt{\norm{\R f}^2+\norm{\I f}^2}=\norm{f} \leq 1.$$
As $g_1(x)=g_2(x)=0$ for $x \in F\cup F_n$, we have $h(x)=0$ for $x\in F\cup F_n$. By the definition of $B  $, we have $h\in B  $.

\noindent\textbf{Step 2c: we show
$\norm{f-h}\leq 8 \epsilon.$}  
  First, assume $x\in G_n^c$.
     Thus 
    $$|\R f(x)-f_1(x)|=\left|\R f(x)-\frac{1}{1+\epsilon}\R f(x)\right|\leq \frac{\epsilon}{1+\epsilon}|f(x)|\leq \epsilon $$
    and similarly $|\I f(x) -f_2(x)|\leq \epsilon$.
    Then, 
    \begin{align*}
    |f(x)-h(x)|&=  (|\R f(x)-f_1(x)|^2+|\I f(x)-f_2(x)|^2)^\frac{1}{2} \leq   \sqrt{2}\epsilon<8 \epsilon.
    \end{align*}
 Second, assume $x \in G_n$. Since $d(x, F\cup F_n)<\epsilon$, there exists $y \in F\cup F_n$ such that $d(x,y)<\epsilon$. Since $\hs_d(F_n,F)<\epsilon^2<\epsilon$, there exists $z \in F_n$ such that $d(y,z)<\epsilon$. Hence $d(x,z)<2\epsilon$.  Hence as $\R f(z)=0$, we have
\[
|\R f(x)|=d(x,z)\cdot \frac{|\R f(x)-\R f(z)|}{d(x,z)}\leq d(x,z)\cdot 1<2 \epsilon.
\]
A symmetric argument  shows that $|g_1(x)|<2\epsilon.$ Thus, by the triangle inequality, $|\R f(x)-g_1(x)|< 4\epsilon$. Similarly, we have $|\I f(x)-g_2(x)| < 4\epsilon$.
Thus, 
$$|f(x)-h(x)|=(|\R f(x)-g_1(x)|^2+|\I f(x)-g_2(x)|^2)^{1/2}\leq 4\sqrt{2}\epsilon < 8 \epsilon.$$ 
Therefore, $|f(x)-h(x)|<8\epsilon$ for all $x \in X$, which completes this step. 

Now, we complete \textbf{Step 2}. 
From \textbf{Step 2b and 2c}, we have $d(f,B)\leq 8\epsilon$. Similarly, we can show that for any $f \in B$, $d(f, B_{n})\leq 8\epsilon$. Thus, 
$\hs_{\norm{\cdot}}(B_{n}, B)\leq 8\epsilon$. As $k_D(f) \leq \norm{f}$ for any $f \in C(X)$, we have that 
$\hs_{k_D}(B_{n}, B)\leq 8\epsilon$. Thus, $\lambda(\gamma_n)\rightarrow 0$ as $n \rightarrow \infty$. Note that
$$\Lambda^{mod}(\Omega^d_{I_n},\Omega^d_I)\leq \lambda(\gamma_n)$$
by \cite[Definition 5.6]{Modular} since any bridge is a trek. 
Thus, $\Lambda^{mod}(\Omega^d_{I_n},\Omega^d_I)\rightarrow 0$ as $n\rightarrow\infty$ which establishes the first continuity result in the statement of the theorem.
The last statement of the theorem is established by the homeomorphism discussed above  between $cl(X)$ equipped with the topology induced by the Hausdorff distance and the Fell topology on $Ideals(C(X)).$
\end{proof}

\bibliographystyle{amsplain}
\bibliography{thesis}
\vfill

\end{document}